\documentclass[a4paper,reqno,12pt]{amsart}
\usepackage{amsmath,amsfonts,amssymb,pdfsync}
\usepackage[latin1]{inputenc}
\usepackage{color}
\usepackage{graphicx} 

\setlength{\topmargin}{+0.3in} \setlength{\textwidth}{6.5in}
\setlength{\textheight}{8.5in} \setlength{\oddsidemargin}{-0.2cm}
\setlength{\evensidemargin}{-0.2cm}


\newtheorem{theorem}{Theorem}[section]

\newtheorem{remark}[theorem]{Remark}


\newcommand{\R}
 {\mathbb{R}}

\numberwithin{equation}{section}

\begin{document}

\title[Error estimate for a Neumann problem]{Error estimates for a Neumann problem in highly oscillating thin domains}

\author[M. C. Pereira]{Marcone C. Pereira$^\dag$}
\thanks{$^\dag$Partially
supported by CNPq 305210/2008-4 and FAPESP 2008/53094-4 and 2010/18790-0, Brazil}
\address[Marcone C. Pereira]{Escola de Artes, Ci\^encias e Humanidades,
Universidade de S\~ao Paulo, S\~ao Paulo SP, Brazil}
\email{marcone@usp.br}

\author[R. P. Silva]{Ricardo P. Silva$^\star$}
\thanks{$^\star$Partially
supported by FAPESP 2008/53094-4, PROPe/UNESP, Brazil}
\address[Ricardo P. Silva]{Instituto de Geoci\^{e}ncias e Ci\^{e}ncias Exatas, Universidade Estadual Paulista, Rio Claro SP, Brazil}
\email{rpsilva@rc.unesp.br}

\date{}

\subjclass[2000]{35B27, 74Q05, 74K10.} 
\keywords{Thin domains, Correctors, Homogenization, Error Estimate.} 

\begin{abstract}
 
In this work we analyze the convergence of solutions of the Poisson equation with Neumann boundary conditions in a two-dimensional thin domain with highly oscillatory behavior. We consider the case where the height of the domain, amplitude and period of the oscillations are all of the same order, and given by a small parameter $\epsilon>0$. Using an appropriate \emph{corrector} approach, we show strong convergence and give error estimates when we replace the original solutions by the first-order expansion through the \emph{Multiple-Scale Method}. 

\end{abstract}

\maketitle


\section{Introduction}

Several important problems arising in physics and engineering lead to consider boundary-value problems in domains with oscillating boundaries, such as flows over rough walls, electromagnetic waves in a region containing a rough interface \cite{N-K:97}, or heat transmission in winglets \cite{ABMG}.

In the present paper we consider the Poisson equation with Neumann boundary conditions, arising from the study of stead state of reaction-diffusion flow over a thin bar with a rough boundary \cite{ACPS}. The aim of our study is to derive precise estimates for the effective behavior of the solutions of such problem.

In order to setup the problem, let $g \in C^1(\R, \R)$ be a $L$-periodic positive function. Given a small parameter $\epsilon > 0$, let us consider the family of two-dimensional domains
\begin{equation} \label{TDG}
R^\epsilon = \{ (x_1,x_2) \in \R^2 \; | \; 0 < x_1 < 1,\;  0 < x_2 < \epsilon \, g({x_1}/{\epsilon}) \}.
\end{equation}
We restrict our attention to the solutions $w^\epsilon$ of the family of elliptic equations 
\begin{equation} \label{P}
\left\{
\begin{gathered}
- \Delta w^\epsilon + w^\epsilon = f^\epsilon
\quad \textrm{ in } R^\epsilon \\
\frac{\partial w^\epsilon}{\partial N^\epsilon} = 0
\quad \textrm{ on } \partial R^\epsilon,
\end{gathered}
\right.
\end{equation} 
where $N^\epsilon$ denotes the unit outward normal vector field to $\partial R^\epsilon$ and $f^\epsilon$ is a non-homogeneous term in $L^2 (R^\epsilon)$ uniformly bounded.

\begin{figure}[htp]
\centering \scalebox{0.7}{\includegraphics{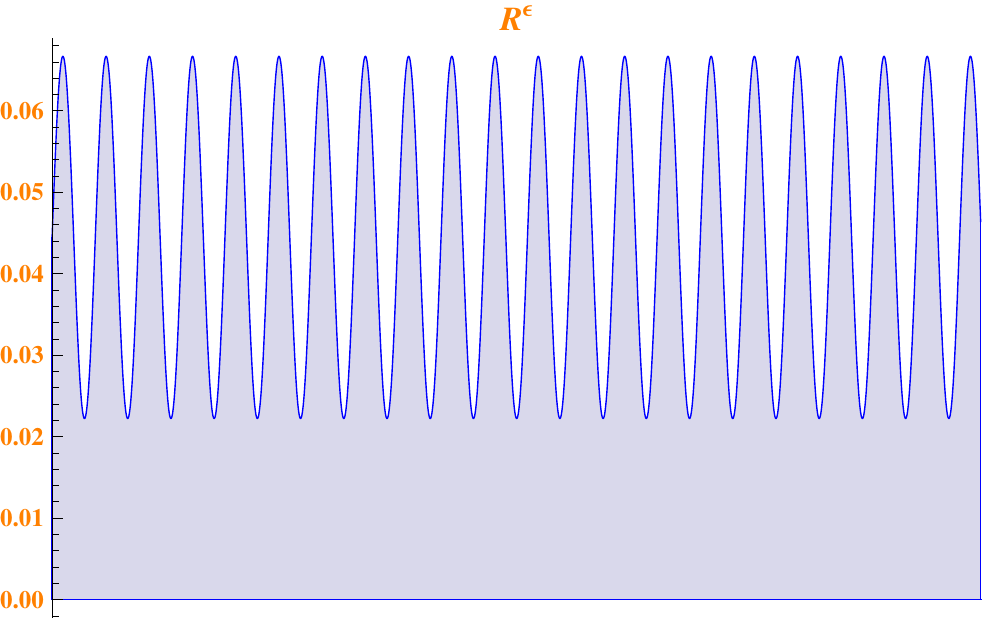}}
\label{thin-domain}
\caption{The thin domain $R^\epsilon$.}
\end{figure}

Many authors have devoted efforts in the investigation of the asymptotic behavior of a similar  equation in related domains, e.g. \cite{ABMG, BCh, DP, TAM, N-K:97}. The basic assumption is that the scale of wavelength of the roughness is small compared with the roughness amplitude. Therefore it is important to emphasize that the amplitude and period of the oscillations of $R^\epsilon$ are of the same order $\epsilon$, which also coincides with the order of thickness of the thin domain. This scaling makes the problem very resonant and the determination of the limiting problem ($\epsilon=0$) is not straightforward. It makes our analysis and results different from those papers.

Going back to problem \eqref{P}, in \cite{ACPS} the authors combine methods from \emph{homogenization theo\-ry}, specially those related to re\-ti\-cu\-lated structures, to obtain the limiting behavior of the family of solutions $w^\epsilon$.   We recall that the convergence obtained in \cite{ACPS} is with respect to the weak topology of the Sobolev space $H^1$. As pointed in \cite{A}, this convergence is the best one in these spaces. In general, one can not expect strong convergence in homogenization related problems. We refer the reader to \cite{BLP,CP-79,SP,Tt} for a general introduction to the theory of homogenization and to \cite{CP} for a general treatise on reticulated structures. 

In such problems, where the analytical solution demands a substantial effort or is unknown, it is of interest in discussing error estimates (in norm) when we replace it with numerical approximations.

In this work we made use of an appropriate \emph{corrector} approach developed by Bensoussan, Lions and Papanicolaou in \cite{BLP} to derive a kind of strong convergence and error estimates in $H^1$-norm. This can be summarized as: 

\emph{Suppose $w^\epsilon \rightharpoonup w_0$ weakly in $H^1(R^\epsilon)$ in a sense to be defined. It is possible to find an explicit corrector term $\kappa^\epsilon \in H^1(R^\epsilon)$, $\kappa^\epsilon = o(\epsilon)$ in $L^2(R^\epsilon)$, in order to}
\begin{equation*} \label{SC}
\epsilon^{-1/2} \|w^\epsilon - w_0 - \kappa^\epsilon \|_{H^1(R^\epsilon)} \leq C \,  \epsilon^{1/2}
\end{equation*}
for $\epsilon>0$ small enough.
The reader is invited to \cite{CD} for a classical introduction to correctors approach,  \cite{Madureira1, Madureira3, GP} for further discussion on approximations of arbitrary order for related problems, and \cite{CDG,Tt} for recent works in this subject as well. 

Still related with the limiting behavior of problems in thin domains but in connection with continuity properties of global attractors associated to parabolic problems, we can cite  \cite{Elsken:2005p40, HR, PR01, Prizzi:2002p441,R, Ricardo} and their references.

Finally, we would like to observe that although we just treat the Neumann problem for the Poisson equation,  we may also consider different conditions in the lateral boundaries of the thin
domain $R^\epsilon$, while preserving the Neumann type boundary condition in the upper and lower boundary. The limiting problem preserves the boundary condition. Note that Dirichlet boundary condition in the upper and lower boundaries is a trivial case since the trace operator is continuous on the set $(0,1) \subset \partial R^\epsilon$. 

This paper is organized as follows. In Section \ref{PRE}, we introduce the functional setting for the perturbed problem \eqref{P} and the limiting one. In Section \ref{MSM}, we provide formal computations to derive the homogenized problem by the Multiple-Scale Method. In Section \ref{sec:fir-corr}, we obtain convergence in $H^1$-norms for the first-order corrector, and in Section \ref{EE}, we use the second-order corrector to obtain rates of convergence for the first order approximation. We emphasize the relevance of these rates for numerical approximations of solutions of partial differential equations in highly heterogeneous and porous media.


\section{Preliminaries} \label{PRE}

We stress for the fact that $R^\epsilon$ varies in accordance with a positive parameter $\epsilon$ and, when $\epsilon$ goes to $0$, the domains $R^\epsilon$ collapse themselves to the  interval $(0,1)$. Therefore, in order to preserve the ``relative capacity'' of a mensurable subset $\mathcal{O} \subset R^{\epsilon}$, we rescale the Lebesgue measure by ${1}/{\epsilon}$, dealing with the singular measure 
$$
\rho_{\epsilon}(\mathcal{O})=\epsilon^{-1} | \mathcal{O} |.
$$ 
This measure has been considered in studies involving thin domains, e.g. \cite{HR,PR01,R}, and allows us introduce the Lebesgue $L^2(R^\epsilon; \rho_{\epsilon})$ and the Sobolev $H^1(R^\epsilon;\rho_{\epsilon})$ spaces. The norms in these spaces will be denoted by $||| \cdot |||_{L^2(R^\epsilon)}$ and $||| \cdot |||_{H^1(R^\epsilon)}$ respectively, being induced by the inner products
\begin{equation*}
(u,v)_\epsilon = \epsilon^{-1} \int_{R^\epsilon} u \, v \, dx, \quad \forall \, u,v \in L^2(R^\epsilon)
\end{equation*} 
and
\begin{equation*}
a_\epsilon(u,v) = \epsilon^{-1} \int_{R^\epsilon} \{ \nabla u \cdot \nabla v + u \, v \} dx, \quad \forall \, u,v \in H^1(R^\epsilon)
\end{equation*} 
respectively.

\begin{remark}
Notice that the $||| \cdot |||$ - norms and the usual ones in $L^2(R^\epsilon)$ and $H^1(R^\epsilon)$ are equivalents and easily related by
$$
\begin{gathered}
||| u |||_{L^2(R^\epsilon)} = \epsilon^{-1/2} \| u \|_{L^2(R^\epsilon)}, \quad \forall \, u \in L^2(R^\epsilon) \\
||| u |||_{H^1(R^\epsilon)} = \epsilon^{-1/2} \| u \|_{H^1(R^\epsilon)}, \quad \forall \,u \in H^1(R^\epsilon).
\end{gathered}
$$
\end{remark}

The variational formulation of (\ref{P}) is: find $w^\epsilon \in H^1(R^\epsilon)$ such that 
\begin{equation} \label{VFP}
\int_{R^\epsilon} \Big\{ \nabla w^\epsilon \cdot \nabla \varphi 
+ w^\epsilon \, \varphi \Big\} dx_1 dx_2 = \int_{R^\epsilon} f^\epsilon \, \varphi \, dx_1 dx_2, \quad \forall \, \varphi \in H^1(R^\epsilon),
\end{equation}
which is equivalent to find $w^\epsilon \in H^1(R^\epsilon;\rho^\epsilon)$ such that 
\begin{equation}\label{eq:vfor}
a_\epsilon(\varphi, w^\epsilon) = (\varphi, f^\epsilon)_\epsilon,  \quad \forall \, \varphi \in H^1(R^\epsilon;\rho_\epsilon). 
\end{equation}
Observe that the solutions $w^\epsilon$ satisfy a priori estimates uniformly  with respect to $\epsilon$. Thus, we can take $\varphi = w^\epsilon$ in (\ref{VFP}) and (\ref{eq:vfor}) to obtain
\begin{equation} \label{priori}
\begin{gathered}
\| \nabla w^\epsilon \|_{L^2(R^\epsilon)}^2
+ \| w^\epsilon \|_{L^2(R^\epsilon)}^2
\le \| f^\epsilon \|_{L^2(R^\epsilon)} \| w^\epsilon \|_{L^2(R^\epsilon)} \\
||| \nabla w^\epsilon |||_{L^2(R^\epsilon)}^2
+ ||| w^\epsilon |||_{L^2(R^\epsilon)}^2
\le ||| f^\epsilon |||_{L^2(R^\epsilon)} ||| w^\epsilon |||_{L^2(R^\epsilon)}.
\end{gathered}
\end{equation}

In order to capture the limiting behavior of $a_\epsilon(w^\epsilon, w^\epsilon)$ as $\epsilon \to 0$, we consider the sesquilinear form $a_0$ in $H^1(0,1)$ given by
\begin{equation} \label{IPA0}
a_0(u,v)  = \hat{g} \int_0^1  \left\{ \dfrac{d u}{dx}  \dfrac{d v}{dx} +  u \, v \right\} dx, \quad \forall \, u,v \in H^1(0,1),
\end{equation}
where
$\hat{g}= \frac{1}{L} \int_0^L g(s) \, ds$ is the average of function $g$ on the interval $(0,L)$. 
We also will consider $L^2(0,1)$ endowed with the norm induced by the  inner product $(\cdot, \cdot)_0$, given by 
\begin{equation} \label{IP0}
(u,v)_0 = \hat{g} \int_{0}^1 u \, v \, dx, \quad \forall \, u,v \in L^2(0,1).
\end{equation}
%


\section{Multiscale Expansion of Solutions} \label{MSM}

In this section we proceed as in \cite{ACPS,BLP,CD,CP,SP}. 
We suppose for a moment $f^\epsilon(x_1,x_2) = f(x_1)$ for all $\epsilon > 0$ and $(x_1,x_2) \in R^\epsilon$. Then we use the Multiple-Scale Method to introduce formally the \emph{homogenized equation} and the \emph{second-order expansion} to the solutions $w^\epsilon \in H^1(R^\epsilon)$ of problem \eqref{P}. We seek for a formal asymptotic expansion of the form
\begin{equation} \label{FE}
w^\epsilon(x_1,x_2) = w_0 \left(x_1,\frac{x_1}{\epsilon},\frac{x_2}{\epsilon} \right)
+\epsilon w_1 \left(x_1,\frac{x_1}{\epsilon},\frac{x_2}{\epsilon} \right) 
+ \epsilon^2 w_2 \left(x_1,\frac{x_1}{\epsilon},\frac{x_2}{\epsilon}\right) + \ldots
\end{equation}
The variables $x_1$ and $x_2$ represent the ``macroscopic" scale on the model, while $x_1/\epsilon$ and $x_2/\epsilon$ represent the ``microscopic" effect of the oscillating phenomena on the thin domain.

The fact that $R^\epsilon$ degenerates to a line segment when $\epsilon$ goes to $0$ suggests that the solutions $w^\epsilon$ will not depend on the ``macroscopic" variable $x_2$. This is taken into account in the choice of $w_i$ assuming that $w_i$ does not depend on the macroscopic variable $x_2$. 
 
In order to construct the functions $w_i$ of \eqref{FE}, motivated by the periodic nature of $R^\epsilon$, we consider the basic cell 
\begin{equation} \label{Y*}
Y^*=\{ (y,z) \in \R^2 \; | \; 0<y<L, \; 0<z<g(y)\}.
\end{equation} 
We decompose $\partial Y^*$ in  the lateral part of the boundary $B_0 =\{(0,z) \in \R^2 \, | \, 0<z<g(0)\} \cup \{(L,z)) \in \R^2 \, | \, 0<z<g(L)\}$, the upper part of the boundary $B_1=\{(y, g(y)) \in \R^2 \, | \, 0<y<L\}$ and the lower part of the boundary $B_2 = \{(y,0)) \in R^2 \, | \, 0<y<L\}$. We also assume that $w_i=w_i(x,y,z)$ is a $L$-periodic function in the variable $y$ (ie. $w_i(x,y+L,z) = w_i(x,y,z)$) which is defined for all $x \in (0, 1)$ and $(y,z) \in Y^*$.

Performing the change of variables $x=x_1$, $y=\dfrac{x_1}{\epsilon}$, $z=\dfrac{x_2}{\epsilon}$,  we obtain that
\begin{equation} \label{RC}
\begin{gathered}
\partial_{x_1}=\partial_x+\frac{1}{\epsilon}\partial_y,\qquad  \partial_{x_2}=\frac{1}{\epsilon}\partial_z \\
\partial_{x_1 x_{1}}=\partial_{xx}+\frac{2}{\epsilon}\partial_{xy}+\frac{1}{\epsilon^2}\partial_{yy},\qquad 
\partial_{x_2 x_{2}}=\frac{1}{\epsilon^2}\partial_{zz}.
\end{gathered}
\end{equation}
Observing from the geometry of $R^\epsilon$ that the unit outward normal vectors to $\partial R^\epsilon$ and to $\partial Y^{*}$, $N^\epsilon= (N^\epsilon_1, N^\epsilon_2)$ and $N=(N_1,N_2)$ respectively are related by
\begin{equation} \label{RN}
N^\epsilon(x_1,x_2) = N \left(\frac{x_1}{\epsilon},\frac{x_2}{\epsilon}\right) \quad \textrm{ a.e. in } \partial R^\epsilon,
\end{equation}
one can see by replacing the expansion \eqref{FE} in the problem \eqref{P}, after equate powers of $\epsilon$, that the differential equations for the functions $w_0$, $w_1$ and $w_2$ are as follows: 
For $w_0$
\begin{equation} \label{eq:w0}
\left\{
\begin{gathered}
-\Delta_{y,z} w_0(x,y,z) = 0 \quad \textrm{ in } Y^* \\
\frac{\partial w_0}{\partial N}(x,y,z)=0 \quad \textrm{ on }  B_1\cup B_2 \\
 w_0(x, \cdot, z) \quad L\text{ - periodic}
\end{gathered}
\right.
\end{equation}
which implies that $w_0$ has to be constant with respect to variables $y, z$, ie, 
$$
w_0(x,y,z)=w_0(x), \quad \forall \, (x,y,z) \in (0, 1) \times Y^*.
$$

Concerning $w_1$, we have
\begin{equation} \label{eq:w1}
\left\{
\begin{gathered}
-\Delta_{y,z} w_1(x,y,z)= 2 \partial_x\partial_y w_0 \quad \textrm{ in } Y^* \\
\frac{\partial w_1}{\partial N}(x,y,g(y))=\frac{g'(y)}{\sqrt{1+ (g'(y))^2}}\frac{dw_0}{dx}(x) \quad \textrm{ on } B_1\\
\frac{\partial w_1}{\partial N}(x,y,z)=0 \quad \textrm{ on } B_2\\
 w_1(x, \cdot, z) \quad L\text{ - periodic}.
\end{gathered}
\right.
\end{equation}
Since $\partial_x\partial_y w_0=0$, denoting by $X(y,z)$ the unique solution (up to a constant) of
\begin{equation} \label{AUXP}
\left\{
\begin{gathered}
-\Delta_{y,z} X(y,z)=0 \quad \textrm{ in } Y^* \\
\frac{\partial X}{\partial N}(y,g(y))= - \frac{g'(y)}{\sqrt{1+ (g'(y))^2}} \quad  \textrm{ on } B_1\\
\frac{\partial X}{\partial N}(y,z)=0 \quad \textrm{ on } B_2\\
 X( \cdot, z) \quad L\text{ - periodic} ,
\end{gathered}
\right.
\end{equation}
we get 
\begin{equation} \label{w1}
w_1(x,y,z) = - X(y,z)\dfrac{dw_0}{dx}(x).
\end{equation}

For $w_2$, the boundary value problem is
$$
\left\{
\begin{gathered}
-\Delta_{y,z} w_2(x,y,z)= f(x)- w_0(x) + 2 \partial_x \partial_y w_1(x,y,z) + \partial_x^2 w_0(x) \quad \textrm{ in } Y^* \\
\frac{\partial w_2}{\partial N}(x,y,g(y))=\frac{g'(y)}{\sqrt{1+ (g'(y))^2}}\frac{\partial w_1}{\partial x}(x,y,g(y)) \quad \textrm{ on } B_1\\
\frac{\partial w_2}{\partial N}(x,y,z)=0 \quad \textrm{ on } B_2\\
 w_2(x, \cdot, z) \quad L\text{ - periodic} ,
\end{gathered}
\right.
$$ 
or, equivalently, upon using the expression for $w_1$ in \eqref{w1}, 
\begin{equation} \label{EMS}
\left\{
\begin{gathered}
-\Delta_{y,z} w_2(x,y,z)= f(x) - w_0(x) + (1 - 2 \partial_yX(y,z))\frac{d^2w_0}{dx^2}(x) \quad \textrm{ in } Y^* \\
\frac{\partial w_2}{\partial N}(x,y,g(y))= - \frac{g'(y)}{\sqrt{1+ (g'(y))^2}}X(y,g(y))\frac{d^2w_0}{dx^2}(x) \quad \textrm{ on } B_1\\
\frac{\partial w_2}{\partial N}(x,y,z)=0 \quad \textrm{ on } B_2\\
 w_2(x, \cdot, z) \quad L\text{ - periodic} .
\end{gathered}
\right.
\end{equation} 

\begin{remark}
These problems are second order partial differential equations in the variables $(y,z) \in Y^*$, with $x \in (0,1)$ playing the role of a parameter.
\end{remark}

Next, by Fredholm alternative we can derive the limiting problem for $w_0$. Indeed, if we take test functions $\phi(x,y,z) = \phi(x)$ in (\ref{EMS}), we get
\begin{eqnarray} \label{EMS1}
0  & = & \int_{Y^*} \phi(x) \, \Big\{ f(x) - w_0(x) + (1 - 2 \partial_yX(y,z))\frac{d^2w_0}{dx^2}(x) \Big\} dy dz \nonumber \\
& & + \int_{\partial Y^*} \phi(x) \frac{\partial w_2}{\partial N} \, dS(y,z).
\end{eqnarray} 

On the other hand, by the Divergence's theorem and (\ref{EMS}),
\begin{eqnarray*}
\int_{Y^*} \phi(x) \frac{d^2w_0}{dx^2}(x) \partial_yX(y,z) dy dz 
& = & \int_{Y^*} \phi(x) \frac{d^2w_0}{dx^2}(x) \nabla_{y,z} X(y,z) \cdot 
\left(
\begin{array}{l}
1 \\
0
\end{array}
\right)
dy dz \\
& = & \int_{\partial Y^*} \phi(x) \, \frac{d^2w_0}{dx^2}(x) \, \partial_y X(y,z) \, N_1 \, dS \\
& = & \int_{\partial Y^*} \phi(x) \, \frac{\partial w_2}{\partial N}(y,z) \, dS.
\end{eqnarray*}
Therefore replacing this expression in (\ref{EMS1}), we reach
\begin{eqnarray}
0  = \int_{Y^*} \phi(x) \, \Big\{ f(x) - w_0(x) + \left( 1 - \partial_yX(y,z) \right) \frac{d^2w_0}{dx^2}(x) \Big\} dy dz, \quad \forall \phi \in C_0^\infty(0,1).
\end{eqnarray}
Since $X$ depends only on $y$ and $z$, we can conclude that
$w_0$ must satisfy
\begin{equation} \label{HOME}
\left\{
\begin{gathered}
-r \frac{d^2w_0}{dx^2}(x) + w_0(x) = f(x), \quad x \in (0,1)\\
w_0'(0)=w_0'(1)=0
\end{gathered}
\right.
\end{equation} 
where 
\begin{equation} \label{HOMC}
r=\frac{1}{|Y^*|}\int_{Y^*} \Big\{ 1 - {\partial_y X}(y,z) \Big\} dydz.
\end{equation}

\begin{remark}
The second order differential equation $(\ref{HOME})$ with the constant coefficient $r$ is called the homogenized equation of problem $(\ref{P})$ 
with the homogenized coefficient $r$.
\end{remark}

Now we use the homogenized equation \eqref{HOME} to describe $w_2$.  
We rewrite problem (\ref{EMS}) as 
\begin{equation}\label{eq:w2carac}
\left\{
\begin{gathered}
-{\rm div}_{y,z} \left( \nabla_{y,z} w_2(x,y,z) - \frac{d^2w_0}{dx^2}(x) 
\left(
\begin{array}{c}
X(y,z) \\
0
\end{array}
\right)
\right)
= \left( 1 - r -\partial_y X(y,z) \right) \frac{d^2w_0}{dx^2}(x)  \\
\frac{\partial w_2}{\partial N}(x,y,g(y))= X(y,g(y)) \, \frac{d^2w_0}{dx^2}(x) \, N_1 \quad \textrm{ on } B_1\\
\frac{\partial w_2}{\partial N}(x,y,z) = 0 \quad \textrm{ on } B_2\\
 w_2(x, \cdot, z) \quad L\text{ - periodic} .
\end{gathered}
\right.
\end{equation}
The linearity of \eqref{eq:w2carac} together with the fact that $\frac{d^2 w_0}{dx^2}$ does not depend on the variables $y$ and $z$ suggest that we look for $w_2(x,y,z)$ of the form
\begin{equation} \label{w2}
w_2(x,y,z) = \theta(y,z) \, \frac{d^2w_0}{dx^2}(x) \quad \text{ for } x \in (0,1) \textrm{ and } (y,z) \in Y^*
\end{equation}
where $\theta$ is the solution of the auxiliary problem
\begin{equation} \label{EMS2}
\left\{
\begin{gathered}
-{\rm div}_{y,z} \left( \nabla_{y,z} \theta(y,z) -  
\left(
\begin{array}{c}
X(y,z) \\
0
\end{array}
\right)
\right)
= 1 - r -\partial_y X(y,z) \quad \textrm{ in } Y^* \\
\left( \nabla_{y,z} \theta(y,z) -  
\left(
\begin{array}{c}
X(y,z) \\
0
\end{array}
\right)
\right) \cdot N = 0 \quad \textrm{ on } B_1 \cup B_2\\
 \theta(\cdot, z) \quad L\text{ - periodic} .
\end{gathered}
\right. 
\end{equation} 
Then, we can use (\ref{w1}) and (\ref{w2}) to introduce the following asymptotic expansion for \eqref{P}:
\begin{equation}\label{eq:assexp}
w^\epsilon(x_1,x_2) = w_0(x_1) - \epsilon \, X\left(\frac{x_1}{\epsilon},\frac{x_2}{\epsilon}\right) \, \frac{dw_0}{dx}(x_1)  + \epsilon^2 \, \theta\left(\frac{x_1}{\epsilon},\frac{x_2}{\epsilon}\right) \, \frac{d^2w_0}{dx^2}(x_{1}) + \ldots
\end{equation}
This expansion will play an essential role in Section \ref{EE} below. 

\begin{remark} \label{FandSC}
According to Bensoussan, Lions and Papanicolaou in \cite{BLP}, the functions $X$ and $\theta$ define the first-order correctors 
\begin{equation}\label{eq:cco1}
\kappa^\epsilon(x_1,x_2) = - \epsilon X\left(\dfrac{x_1}{\epsilon},\dfrac{x_2}{\epsilon}\right) \dfrac{dw_0}{dx_1}(x_1), 
\quad (x_1,x_2) \in R^\epsilon
\end{equation}
and the second-order correctors
\begin{equation}\label{eq:cco2}
\mu^\epsilon(x_1,x_2) = - \epsilon \, X \left(\frac{x_1}{\epsilon},\frac{x_2}{\epsilon} \right) \, \frac{dw_0}{dx}(x_1) + \epsilon^2 \, \theta \left(\frac{x_1}{\epsilon},\frac{x_2}{\epsilon} \right) \, \frac{d^2w_0}{dx^2}(x_1), 
\quad (x_1,x_2) \in R^\epsilon.
\end{equation}
\end{remark}

\begin{remark} \label{XTHETA}
The functions $X$ and $\theta$ are originally defined in the representative cell $Y^*$, but to consider these functions in the thin domain $R^\epsilon$, we use their periodicities at $y$ to extend them to the band
$$
Y = \{ (y,z) \in \R^2 \, | \, y \in \R, \, 0 < z < g(y) \},
$$
and we compose them with the diffeomorphisms 
$$
T^\epsilon: R^\epsilon \mapsto Y \, : \, (x_1,x_2) \to (x_1/\epsilon, x_2/\epsilon).
$$
In the analysis below, with some abuse of notation we will denote these compositions by $X\left(x_1/\epsilon,x_2/\epsilon\right)$ 
and $\theta\left(x_1/\epsilon,x_2/\epsilon\right)$ everywhere for $(x_1,x_2) \in R^\epsilon$. With these considerations we can obtain some estimates on $R^\epsilon$ for $X$ and $\theta$ as well. It is easy to see that

\begin{equation}\label{eq:Xstim}
\begin{split}
\| X \|_{L^2(R^\epsilon)}^2  & =  \int_{R^\epsilon} \left| X({x_1}/\epsilon,{x_2}/\epsilon)  \right|^2 dx_1 dx_2 \\
& \leq  \sum_{k=1}^{1/\epsilon L} \epsilon^2 \int_{Y^*} \left| X(y,z)  \right|^2 dy dz  \\
& \leq {\epsilon}/{L} \| X \|_{L^2(Y^*)}^2.
\end{split}
\end{equation} 
Similarly we can get
\begin{equation}\label{eq:anothestm}
\begin{gathered}
 \| \theta \|^2_{L^2(R^\epsilon)} \leq{{\epsilon}/{L}} \| \theta \|^2_{L^2(Y^*)}, \\
 \| \partial_y X \|^2_{L^2(R^\epsilon)} \leq{{\epsilon}/{L}} \| \partial_y X \|^2_{L^2(Y^*)},  \\
 \| \partial_z X \|^2_{L^2(R^\epsilon)} \leq{{\epsilon}/{L}} \| \partial_z X \|^2_{L^2(Y^*)},  \\
 \| \partial_y \theta \|^2_{L^2(R^\epsilon)} \leq{{\epsilon}/{L}} \| \partial_y \theta \|^2_{L^2(Y^*)}, \\
 \| \partial_z \theta \|^2_{L^2(R^\epsilon)} \leq{{\epsilon}/{L}} \| \partial_z \theta \|^2_{L^2(Y^*)}.
 \end{gathered}
\end{equation}
\end{remark}

\begin{remark}
We can solve the problems $(\ref{eq:w0})$, $(\ref{eq:w1})$, $(\ref{AUXP})$, $(\ref{eq:w2carac})$ and $(\ref{EMS2})$ applying the Lax-Milgram Theorem to the elliptic form
$$
a_{Y^*}(\varphi,\phi) = \int_{Y^*} \nabla_{y,z} \varphi \cdot \nabla_{y,z} \phi \, dy dz, \quad \forall \varphi, \phi \in H^1(Y^*)
$$
on the set $V = V_{Y^*} / \R$ where
$$
V_{Y^*} = \{ \varphi \in H^1(Y^*) \; | \; \varphi \text{ is } L \text{- periodic} \textrm{ with respect to $y$-variable}  \}.
$$
Indeed, the following quantity
$$
\| \varphi \|_V = \left( \int_{Y^*} \left| \nabla \varphi \right|^2 \, dydz \right)^{1/2}
$$
defines a norm on $V$.
\end{remark}

\begin{remark}\label{PCHOME}
Also, we can use the elliptic form $a_{Y^*}$ to show that the homogenized coefficient $r$ is positive. We will perform this here for reader's convenience.
For all $\phi \in V$, we have that the solution $X$ satisfies 
$$
a_{Y^*}(X,\phi) = \int_{{B_{1}}} N_1 \, \phi \, dS.
$$
Recall that $B_1$ is the upper part of the boundary of the basic cell. Consequently, $y_1 - X$ satisfies
\begin{equation} \label{EQB0}
a_{Y^*}(y - X, \phi) = \int_{{B_{1}}} N_1 \phi \, dS 
- \int_{{B_{1}}} N_1 \, \phi \, dS = 0
\end{equation}
for all $\phi \in V$.
Also, we have by $(\ref{HOMC}) $
\begin{eqnarray} \label{EQB1}
r \, |Y^*|  
& = &  \int_{Y^*} \frac{\partial}{\partial y}(y - X(y,z)) \, \frac{\partial y}{\partial y} \, dy dz 
= \int_{Y^*} \nabla(y - X(y,z)) \cdot \nabla y \, dy dz \nonumber \\
& = & a_{Y^*}(y - X, y). 
\end{eqnarray}
Hence, due to relation $(\ref{EQB0})$ with $\phi = - X$ and identity $(\ref{EQB1})$, we get
\begin{equation} \label{AP}
r \, |Y^*| = a_{Y^*}(y - X, y) + a_{Y^*}(y - X, - X) = a_{Y^*}(y - X, y - X) > 0.
\end{equation}
\end{remark}


\section{First-order Corrector}\label{sec:fir-corr} 

As already noted, the solutions $w^\epsilon$ of \eqref{P} actually do not converge in $H^1$-norms. However, if we ``improve'' $w^\epsilon$ by its first-order corrector, we are able to show
the following result

\begin{theorem}\label{teo:first-order}
Let $w^\epsilon$ be the solution of problem \eqref{P} with $f^\epsilon \in L^2(R^\epsilon)$ satisfying
$$
||| f^\epsilon |||_{L^2(R^\epsilon)} \leq C
$$ 
for some $C > 0$ independent of $\epsilon$. 
Consider the family of functions $\hat f^\epsilon \in L^2(0,1)$ defined by
\begin{equation}\label{def-hat-f}
\hat f^\epsilon (x_1) = \epsilon^{-1} \int_0^{\epsilon g(x_1/\epsilon)} f^\epsilon(x_1,x_2)dx_2 .
\end{equation}
If $\hat f^\epsilon \rightharpoonup \hat f $\,  w-$L^2(0,1)$, then
\begin{equation}\label{eq:conv-f-corr}
\lim_{\epsilon \to 0} ||| w^\epsilon  - w_0 - \kappa^\epsilon |||_{H^1(R^\epsilon)} = 0,
\end{equation} 
where $\kappa^\epsilon$ is the first-order corrector of $w^\epsilon$ defined in Remark $\ref{FandSC}$ and 
$w_0 \in H^2(0,1) \cap C^1(0,1)$ is the  unique solution of the homogenized equation $(\ref{HOME}) $
with 
\begin{equation} \label{F0}
f_0=\dfrac{1}{\hat g} \; \hat f.
\end{equation}
\end{theorem}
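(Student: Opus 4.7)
The plan is to establish strong $H^1(R^\epsilon;\rho_\epsilon)$ convergence of the residual $\psi^\epsilon:=w^\epsilon-w_0-\kappa^\epsilon$ via an energy identity of Cea type, coupled with the strong $L^2$-convergence $|||w^\epsilon-w_0|||_{L^2(R^\epsilon)}\to 0$ that accompanies the weak $H^1$-convergence established in \cite{ACPS}. Since \eqref{eq:vfor} gives $a_\epsilon(w^\epsilon,\psi^\epsilon)=(f^\epsilon,\psi^\epsilon)_\epsilon$, the coercivity $a_\epsilon(u,u)=|||u|||_{H^1(R^\epsilon)}^2$ yields
\[|||\psi^\epsilon|||_{H^1(R^\epsilon)}^2 = a_\epsilon(\psi^\epsilon,\psi^\epsilon) = -\mathcal{E}_\epsilon(\psi^\epsilon),\qquad \mathcal{E}_\epsilon(\psi):=a_\epsilon(w_0+\kappa^\epsilon,\psi)-(f^\epsilon,\psi)_\epsilon.\]
The cell estimate $\|X(\cdot/\epsilon)\|_{L^2(R^\epsilon)}\le C\sqrt{\epsilon}$ of Remark \ref{XTHETA} gives $|||\kappa^\epsilon|||_{L^2(R^\epsilon)}\le C\epsilon$, hence $|||\psi^\epsilon|||_{L^2(R^\epsilon)}\to 0$, and the task reduces to showing $|\mathcal{E}_\epsilon(\psi^\epsilon)|\to 0$.

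Expanding $\nabla(w_0+\kappa^\epsilon)$ by the chain rule produces a principal vector field $V^\epsilon(x):=w_0'(x_1)\bigl(1-\partial_y X,\,-\partial_z X\bigr)(x_1/\epsilon,x_2/\epsilon)$ and a remainder $-\epsilon w_0''(x_1) X(x/\epsilon)\mathbf{e}_1$; the remainder, together with the contribution of $\kappa^\epsilon\psi$ to $a_\epsilon(w_0+\kappa^\epsilon,\psi)$, is $O(\epsilon)|||\psi|||_{H^1(R^\epsilon)}$ by Remark \ref{XTHETA}. For the principal piece, the auxiliary problem \eqref{AUXP} gives $\operatorname{div}_{y,z}(1-\partial_y X,\,-\partial_z X)=-\Delta_{y,z}X=0$ in $Y^*$ and $(1-\partial_y X,\,-\partial_z X)\cdot N=0$ on $B_1\cup B_2$; via the normal relation \eqref{RN} the flux of $V^\epsilon$ vanishes on the oscillating part of $\partial R^\epsilon$, while the homogenized Neumann data $w_0'(0)=w_0'(1)=0$ from \eqref{HOME} kill the flux on the vertical edges $\{x_1=0,1\}$. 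A single integration by parts converts the main gradient term into $-\epsilon^{-1}\int_{R^\epsilon}w_0''(x_1)(1-\partial_y X(x/\epsilon))\psi\,dx$; combining with $(w_0-f^\epsilon,\psi)_\epsilon$ and substituting $w_0=f_0+rw_0''$ from \eqref{HOME} yields
\[\mathcal{E}_\epsilon(\psi) = I_\epsilon(\psi)+II_\epsilon(\psi)+O(\epsilon)|||\psi|||_{H^1(R^\epsilon)}\]
with $I_\epsilon(\psi):=(f_0-f^\epsilon,\psi)_\epsilon$ and $II_\epsilon(\psi):=-\epsilon^{-1}\int_{R^\epsilon}w_0''(x_1)\bigl[1-r-\partial_y X(x/\epsilon)\bigr]\psi\,dx$.

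The evaluation $I_\epsilon(\psi^\epsilon)$ is controlled immediately by Cauchy--Schwarz and the uniform bound on $|||f^\epsilon|||_{L^2(R^\epsilon)}$:
$|I_\epsilon(\psi^\epsilon)|\le|||f_0-f^\epsilon|||_{L^2(R^\epsilon)}\,|||\psi^\epsilon|||_{L^2(R^\epsilon)}\to 0$;
observe that the weak hypothesis on $\hat f^\epsilon$ enters only through the identification of $f_0=\hat f/\hat g$ as the right-hand side of \eqref{HOME}. For $II_\epsilon$, the key observation is that $h(y,z):=1-r-\partial_y X$ has zero $Y^*$-average---precisely by the definition \eqref{HOMC} of $r$---and the second-order cell problem \eqref{EMS2} provides the flux representation $h=-\operatorname{div}_{y,z}G$ with $G:=\nabla\theta-(X,0)$ and $G\cdot N=0$ on $B_1\cup B_2$. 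Consequently $h(x/\epsilon)=-\epsilon\operatorname{div}_x[G(x/\epsilon)]$, and one further integration by parts---carried out after approximating $w_0''$ by some $\phi_\delta\in C_c^\infty(0,1)$ close in $L^2(0,1)$, which suppresses the vertical-edge boundary trace and legitimizes the chain rule---yields $|II_\epsilon^\delta(\psi^\epsilon)|\le C(\delta)\epsilon|||\psi^\epsilon|||_{H^1(R^\epsilon)}$ (using $\|G(\cdot/\epsilon)\|_{L^2(R^\epsilon)}\le C\sqrt{\epsilon}$), while the boundedness of $\partial_y X(\cdot/\epsilon)$ in $L^\infty$ gives the approximation error $|II_\epsilon(\psi^\epsilon)-II_\epsilon^\delta(\psi^\epsilon)|\le C\|w_0''-\phi_\delta\|_{L^2(0,1)}\,|||\psi^\epsilon|||_{L^2(R^\epsilon)}$.

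Collecting the estimates one obtains
\[|||\psi^\epsilon|||_{H^1(R^\epsilon)}^2 \le o_\epsilon(1) + C(\delta)\,\epsilon\,|||\psi^\epsilon|||_{H^1(R^\epsilon)} + C\|w_0''-\phi_\delta\|_{L^2(0,1)}\,|||\psi^\epsilon|||_{L^2(R^\epsilon)},\]
and letting first $\epsilon\to 0$ for fixed $\delta$ and then $\delta\to 0$ through density of $C_c^\infty(0,1)$ in $L^2(0,1)$, together with $|||\psi^\epsilon|||_{L^2(R^\epsilon)}\to 0$, delivers $|||\psi^\epsilon|||_{H^1(R^\epsilon)}\to 0$. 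The main obstacle is the treatment of $II_\epsilon$: one has to exploit the zero-average structure of the oscillating coefficient $h$ through the cell corrector $G$ already provided by \eqref{EMS2}, and the low regularity of $w_0''$ (only $L^2$) together with the non-vanishing trace of $G(\cdot/\epsilon)$ on the vertical edges forces the two-stage limit via the density reduction. A pleasant feature of the argument is that the weak hypothesis on $\hat f^\epsilon$ itself never appears explicitly in the error estimate; it enters only through its role in identifying $f_0$ and, indirectly, in the $L^2$-vanishing of $\psi^\epsilon$ inherited from \cite{ACPS}.
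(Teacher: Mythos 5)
Your proof takes a genuinely different route from the paper's. The paper's argument is a ``convergence of energies'' computation: invoking the variational formulation of \eqref{P} twice and the symmetry of $a_\epsilon$, it rewrites $|||\psi^\epsilon|||^2_{H^1(R^\epsilon)}=(w^\epsilon-2(w_0+\kappa^\epsilon),f^\epsilon)_\epsilon+a_\epsilon(w_0+\kappa^\epsilon,w_0+\kappa^\epsilon)$, computes the limit of each term by periodic averaging of $L$-periodic cell quantities, and closes by using the weak formulation of \eqref{HOME} so that the limit is $a_0(w_0,w_0)-(w_0,f_0)_0=0$. You instead estimate the residual functional $\mathcal{E}_\epsilon(\psi)=a_\epsilon(w_0+\kappa^\epsilon,\psi)-(f^\epsilon,\psi)_\epsilon$ by a strong-form integration by parts driven by the cell problems \eqref{AUXP} and \eqref{EMS2}; this is essentially the mechanism of Theorem~\ref{T2} (Section~\ref{EE}) adapted to the weaker hypotheses of Theorem~\ref{teo:first-order}, with a density step standing in for the extra smoothness available there. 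Your decomposition of $\nabla(w_0+\kappa^\epsilon)$, the vanishing of the boundary flux of $V^\epsilon$ (by the cell Neumann conditions together with $w_0'(0)=w_0'(1)=0$), the substitution $w_0=f_0+rw_0''$ and the resulting $I_\epsilon$, $II_\epsilon$ split, and the treatment of $I_\epsilon$ via $|||\psi^\epsilon|||_{L^2(R^\epsilon)}\to0$ are all correct. The advantage of your scheme is that it makes the residual structure explicit and would, under stronger regularity, deliver a rate; the paper's scheme is deliberately economical in regularity, since $w_0''$ never appears there without an accompanying factor of $\epsilon$.

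There is, however, a genuine gap in your treatment of $II_\epsilon$. You assert $|II_\epsilon(\psi^\epsilon)-II_\epsilon^\delta(\psi^\epsilon)|\le C\,\|w_0''-\phi_\delta\|_{L^2(0,1)}\,|||\psi^\epsilon|||_{L^2(R^\epsilon)}$, justifying the constant by ``the boundedness of $\partial_y X(\cdot/\epsilon)$ in $L^\infty$.'' Under the standing hypothesis $g\in C^1(\R,\R)$ the cell corrector $X$ is produced by Lax--Milgram and lies only in $H^1(Y^*)$; there is no a priori bound $\partial_y X\in L^\infty(Y^*)$. Without it the integrand $(w_0''-\phi_\delta)(x_1)\bigl(1-r-\partial_y X(x/\epsilon)\bigr)\psi^\epsilon(x)$ pairs an $L^2(0,1)$ function, an $L^2(Y^*)$ oscillating coefficient, and an $L^2(R^\epsilon;\rho_\epsilon)$ function, and the product is not controlled: setting $\Phi(y)=\int_0^{g(y)}|1-r-\partial_y X(y,z)|^2\,dz$, you would need $\Phi\in L^\infty(0,L)$, whereas $X\in H^1(Y^*)$ gives only $\Phi\in L^2(0,L)$, which is not enough to majorize $\int_0^1|w_0''-\phi_\delta|^2\Phi(x_1/\epsilon)\,dx_1$ by $\|w_0''-\phi_\delta\|_{L^2(0,1)}^2$. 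The paper's energy-convergence identity sidesteps this entirely: in \eqref{eq:ARGUE} the only place $w_0''$ enters is in the term $\epsilon X(x/\epsilon)\frac{dw_0}{dx_1}\frac{d^2w_0}{dx_1^2}$, where the extra factor $\epsilon X$ (and $X\in L^\infty(Y^*)$ by the maximum principle for harmonic functions) makes the contribution $O(\epsilon)$ with only $L^2$ control of $w_0''$, and the periodic averaging in \eqref{eq*} involves $(w_0')^2\in L^\infty(0,1)$ paired against $\Phi\in L^2(0,L)$, which is admissible. Your argument would be repaired under strengthened hypotheses --- for instance $g\in C^{1,\alpha}$, giving $X\in C^{1,\alpha}(\overline{Y^*})$ by Schauder theory, or the hypotheses of Theorem~\ref{T2} giving $w_0''\in L^\infty(0,1)$ --- but as it stands the approximation-error step does not close under the hypotheses of Theorem~\ref{teo:first-order}.
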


\begin{proof} 
By variational formulation of \eqref{P}, we have 
$$
a_\epsilon(\varphi, w^\epsilon) = (\varphi,f^\epsilon)_\epsilon,  \quad \forall \, \varphi \in H^1(R^\epsilon). 
$$
Thus, observing that $w_0 + \kappa^\epsilon \in H^1(R^\epsilon)$\footnote{\text{ Here } $w_0$ \text{ is considered as a function of } $x_1$ \text{ and } $x_2$, \text{ simply with some abuse writing} $w_0(x_1,x_2)=w_0(x_1)$.}, we obtain by symmetry of $a_\epsilon$
\begin{equation}\label{eq:nor-firs-corr}
\begin{split}
|||w^\epsilon - w_0 - \kappa^\epsilon|||^2_{H^1(R^\epsilon)} & = 
a_\epsilon(w^\epsilon - w_0 - \kappa^\epsilon,w^\epsilon - w_0 - \kappa^\epsilon) \\
& =  a_\epsilon(w^\epsilon,w^\epsilon - w_0 - \kappa^\epsilon) - a_\epsilon(w_0 + \kappa^\epsilon,w^\epsilon) + a_\epsilon(w_0 + \kappa^\epsilon,w_0 + \kappa^\epsilon)\\
& = (w^\epsilon-2(w_0 + \kappa^\epsilon) , f^\epsilon)_\epsilon + a_\epsilon(w_0 + \kappa^\epsilon,w_0 + \kappa^\epsilon) .
\end{split}
\end{equation}

Using the change of variables $(x,y) \to (x,y/\epsilon)$ on \cite[Theorem 4.3]{ACPS}, it is easy to see that 
\begin{equation*}
\epsilon^{-1} \| w^\epsilon - w_0 \|_{L^2(R^\epsilon)} \to 0 \textrm{ as } \epsilon \to 0.
\end{equation*}
Consequently, $||| w^\epsilon - w_0 |||_{L^2(R^\epsilon)} \stackrel{\epsilon \to 0} \longrightarrow 0$. Therefore
\begin{equation} \label{*}
(w^\epsilon-w_0,f^\epsilon)_\epsilon \leqslant  |||w^\epsilon-w_0|||_{L^2(R^\epsilon)} |||f^\epsilon |||_{L^2(R^\epsilon)}  \to 0, \quad \text{as } \epsilon \to 0.
\end{equation}
By \eqref{eq:Xstim}, we also obtain
\begin{equation} \label{**}
(\kappa^\epsilon,f^\epsilon)_\epsilon 
\leqslant \epsilon^{-1} \|\kappa^\epsilon \|_{L^2(R^\epsilon)} \|f^\epsilon \|_{L^2(R^\epsilon)} 
\leqslant  \frac{\epsilon\, C}{L^{1/2}}  \| X \|_{L^2(Y^*)} \left\| \dfrac{dw_0}{dx_1} \right\|_{L^\infty(0,1)} \to 0, \quad \text{as } \epsilon \to 0 .
\end{equation}
Now, since $\hat f^\epsilon \rightharpoonup \hat f $\,  w-$L^2(0,1)$ we have
\begin{eqnarray} \label{***}
(w_0,f^\epsilon)_\epsilon & = & \epsilon^{-1} \int_0^1 w_0 (x_1) \int_0^{\epsilon g(x_1/\epsilon) } f^\epsilon (x_1,x_2) \, dx_2dx_1 \nonumber \\
& = & \int_0^1 w_0 (x_1) \hat f^\epsilon(x_1)\, dx_1 \nonumber \\
& \to & \hat{g} \int_0^1 w_0 (x_1) f_0 (x_1) \, dx_1, \quad \text{ as } \epsilon \to 0 .
\end{eqnarray}
Therefore, we get from (\ref{*}), (\ref{**}), (\ref{***}) and (\ref{IP0}) that
\begin{equation} \label{eq:TCH}
(w^\epsilon-2(w_0 + \kappa^\epsilon) , f^\epsilon)_\epsilon \stackrel{\epsilon \to 0} \longrightarrow ( w_0, f_0 )_0.
\end{equation}

Next we show that 
\begin{equation} \label{eqeq}
a_\epsilon(w_0 + \kappa^\epsilon,w_0 + \kappa^\epsilon) \to  a_0(w_0 ,w_0 ) \textrm{ as } \epsilon \to 0.
\end{equation} 
First we compute the limit of
\begin{equation} \label{eq:ARGUE}
\begin{split}
 a_\epsilon(w_0 + \kappa^\epsilon,w_0) & = \epsilon^{-1} \int_{R^\epsilon}\big\{ \nabla(w_0 + \kappa^\epsilon) \cdot \nabla w_0 + (w_0 + \kappa^\epsilon) w_0 \big\} dx_1 dx_2\\
 & = \epsilon^{-1} \int_{R^\epsilon} \left\{ \dfrac{dw_0}{dx_1} - {\partial_y X}\left(\frac{x_1}{\epsilon},\frac{x_2}{\epsilon}\right)\dfrac{dw_0}{dx_1} - \epsilon X\left(\frac{x_1}{\epsilon},\frac{x_2}{\epsilon}\right) \dfrac{d^2w_0}{dx_1^2} \right\} \dfrac{dw_0}{dx_1}  dx_1 dx_2 \\
 &  +  \epsilon^{-1} \int_{R^\epsilon} (w_0 + \kappa^\epsilon) w_0 \, dx_1 dx_2 \\
 & = \epsilon^{-1} \int_{R^\epsilon} \dfrac{dw_0}{dx_1}^2 \left\{ 1 - {\partial_y X} \left(\frac{x_1}{\epsilon},\frac{x_2}{\epsilon}\right)\right\} \, dx_1 dx_2 +  \epsilon^{-1} \int_{R^\epsilon} |w_0|^2 \, dx_1 dx_2 \\
& - \epsilon^{-1} \int_{R^\epsilon} \left\{ \epsilon X\left(\frac{x_1}{\epsilon},\frac{x_2}{\epsilon}\right) \dfrac{dw_0}{dx_1}  \dfrac{d^2w_0}{dx_1^2}  + \epsilon X\left(\frac{x_1}{\epsilon},\frac{x_2}{\epsilon}\right)w_0 \dfrac{dw_0}{dx_1} \right \} dx_1 dx_2
\end{split}
\end{equation}
as $\epsilon \to 0$.
Since
$$
 \epsilon^{-1} \int_{R^\epsilon} \dfrac{dw_0}{dx_1}^2 \left\{ 1 - {\partial_y X} \left(\frac{x_1}{\epsilon},\frac{x_2}{\epsilon}\right) \right\} \, dx_1 dx_2 =  \int_0^1 \int_0^{g(x_1/\epsilon)} \dfrac{dw_0}{dx_1}^2 \left\{ 1 -  {\partial_y X} \left(\frac{x_1}{\epsilon},z \right) \right\} dz  dx_1,
$$
and 
$$
\Phi(y) = \displaystyle \int_0^{g(y)} \left\{ 1 -  {\partial_y X} \left(y,z\right) \right\} \, dz 
$$ 
is a $L$-periodic function, we obtain that
\begin{eqnarray} \label{eq*}
 \epsilon^{-1} \int_{R^\epsilon} \dfrac{dw_0}{dx_1}^2 \left\{ 1 -  {\partial_y X} \left(\frac{x_1}{\epsilon},\frac{x_2}{\epsilon}\right) \right\} \, dx_1 dx_2 & \to & \int_0^1  \dfrac{dw_0}{dx_1}^2  \frac{1}{L} \int_0^L \int_0^{g(y)} \big\{ 1 - {\partial_y X}(y,z) \big\} \, dz  dy dx_1 \nonumber \\
& = & \hat{g} \int_0^1 r \;  \dfrac{dw_0}{dx_1}^2 \, dx_1 \quad \textrm{ as } \epsilon \to 0.
\end{eqnarray}

Notice that we also have
\begin{equation} \label{eq**}
\epsilon^{-1} \int_{R^\epsilon} | w_0 |^2 \, dx_1 dx_2 \to \hat g \int_0^1 | w_0 |^2 dx_1 \quad \textrm{ as } \epsilon \to 0.
\end{equation}
Since $w_0$ does not depend on $x_2$, it follows from \eqref{eq:Xstim} that
\begin{equation}\label{eq:corrfir}
- \epsilon^{-1} \int_{R^\epsilon} \left\{ \epsilon X\left(\frac{x_1}{\epsilon},\frac{x_2}{\epsilon}\right) \dfrac{dw_0}{dx_1}  \dfrac{d^2w_0}{dx_1^2}  + \epsilon X\left(\frac{x_1}{\epsilon},\frac{x_2}{\epsilon} \right) w_0 \dfrac{dw_0}{dx_1} \right\} dx_1 dx_2 \to 0, \text{ as } \epsilon \to 0.
\end{equation}  
Hence, we have from (\ref{eq*}), (\ref{eq**}), (\ref{eq:corrfir}) and (\ref{IPA0}) that
\begin{equation}
a_\epsilon(w_0 + \kappa^\epsilon,w_0) \to a_0(w_0,w_0), \text{ as } \epsilon \to 0.
\end{equation} 

Finally, arguing as in (\ref{eq:ARGUE}), we can obtain from \eqref{eq:Xstim} and \eqref{eq:anothestm} that
$$
a_\epsilon(w_0 + \kappa^\epsilon,\kappa^\epsilon) \to 0, \text{ as } \epsilon \to 0,
$$
getting the statement (\ref{eqeq}).

Therefore, in accordance with \eqref{eq:nor-firs-corr}, we obtain
$$
 |||w^\epsilon - w_0 - \kappa^\epsilon |||^2_{H^1(R^\epsilon)}  \stackrel{\epsilon \to 0}{\longrightarrow} a_0(w_0,w_0) - (w_0 , f_0)_0 = 0
$$
completing the proof.

\end{proof}
\begin{remark}
If the original non homogeneous term $f^\epsilon$ does not depend on $x_2$ and $\epsilon$, ie. $f^\epsilon(x_1,x_2)=f(x_1)$, then it follows from the above definitions $(\ref{def-hat-f})$ and $(\ref{F0})$ that 
$$
\hat f^\epsilon (x_1) = \epsilon^{-1} \int_0^{\epsilon g(x_1/\epsilon)} f(x_1)dx_2 =  g(x_1/\epsilon) f(x_1) \rightharpoonup \hat g f \quad \text{w}-L^2(0,1) .
$$
Hence, equation \eqref{HOME} is in agreement with the one found via the method of Multiple Scales in Section $\ref{MSM}$. 
\end{remark}
\begin{remark} \label{smooth}
In Remark $\ref{PCHOME}$ we show the positiveness of the constant $r$. Hence, the solution $w_0$ of the homogenized equation actually exists, is unique and satisfies $w_0 \in H^2(0,1)  \cap  C^1(0,1)$. 
\end{remark}


\section{Second-order corrector} \label{EE}

{
Let $w_0$ be the homogenized solution (\ref{HOME}), $X$ and $\theta$ be the auxiliary solutions given by (\ref{AUXP}) and (\ref{EMS2}) 
on the basic cell $Y^*$, which were conveniently defined in the thin domain $R^\epsilon$ by way off Remark \ref{XTHETA}.

In this section, we use the second-order corrector \eqref{eq:cco2} to present an error estimate when we replace the solutions $w^\epsilon$ of \eqref{P} by the first-order truncation 
\begin{equation} \label{AE1}
\mathcal{W}_1^\epsilon(x_1,x_2) = w_0(x_1) - \epsilon \, X \left(\frac{x_1}{\epsilon},\frac{x_2}{\epsilon} \right) \, \frac{dw_0}{dx}(x_1),
\quad (x_1,x_2) \in R^\epsilon
\end{equation}
with respect to the norm $||| \cdot |||_{H^1(R^\epsilon)}$ introduced in Section \ref{PRE}. 
Thus, let us consider the second-order truncation 
\begin{equation} \label{AE}
\mathcal{W}_2^\epsilon(x_1,x_2) = w_0(x_1) - \epsilon \, X \left(\frac{x_1}{\epsilon},\frac{x_2}{\epsilon} \right) \, \frac{dw_0}{dx}(x_1) + \epsilon^2 \, \theta \left(\frac{x_1}{\epsilon},\frac{x_2}{\epsilon} \right) \, \frac{d^2w_0}{dx^2}(x_1), \quad (x_1,x_2) \in R^\epsilon.
\end{equation}

\begin{theorem}  \label{T2}
Let $R^\epsilon$ be the thin domain defined in $(\ref{TDG})$ and let $w^\epsilon$ be the solution of problem 
$(\ref{P})$ with $f^\epsilon(x_1,x_2)=f(x_1)$, $f \in W^{2,\infty}(0,1)$. 

Then, if $\mathcal{W}_1^\epsilon$ and $\mathcal{W}_2^\epsilon$ are given by $(\ref{AE1})$ and $(\ref{AE})$ respectively, we obtain that
\begin{equation} \label{SOA}
||| w^\epsilon - \mathcal{W}_2^\epsilon |||_{H^1(R^\epsilon)} \leq K_1 \, \sqrt{\epsilon}.
\end{equation}
Consequently, we obtain the following rate for the first-order approximation
\begin{equation} \label{FOA}
||| w^\epsilon - \mathcal{W}_1^\epsilon |||_{H^1(R^\epsilon)} \leq K_2 \, \sqrt{\epsilon},
\end{equation}
where $K_1$ and $K_2$ are positive constants independent of $\epsilon > 0$.
\end{theorem}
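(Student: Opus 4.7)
The plan is a standard energy/residual argument. Setting $e^\epsilon := w^\epsilon - \mathcal{W}_2^\epsilon$, taking $\varphi = e^\epsilon$ in the variational formulation \eqref{eq:vfor} of \eqref{P} and applying Green's identity to $a_\epsilon(\mathcal{W}_2^\epsilon, e^\epsilon)$, I would get
\begin{equation*}
|||e^\epsilon|||^2_{H^1(R^\epsilon)} = a_\epsilon(e^\epsilon,e^\epsilon) = (R^\epsilon, e^\epsilon)_\epsilon - \epsilon^{-1}\int_{\partial R^\epsilon} \frac{\partial \mathcal{W}_2^\epsilon}{\partial N^\epsilon}\,e^\epsilon\,dS,
\end{equation*}
where $R^\epsilon := f - (-\Delta \mathcal{W}_2^\epsilon + \mathcal{W}_2^\epsilon)$. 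Everything then reduces to showing that both the bulk residual $R^\epsilon$ and the Neumann residual $\partial \mathcal{W}_2^\epsilon/\partial N^\epsilon$ are small in $L^2$, after which Cauchy--Schwarz together with trace estimates in the $|||\cdot|||$-norms closes the loop.

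To bound $R^\epsilon$, I would expand $-\Delta \mathcal{W}_2^\epsilon$ via the chain rule \eqref{RC}. The $\epsilon^{-1}$-contribution reads $\Delta_{y,z}X\cdot w_0'$ and vanishes by \eqref{AUXP}. Collecting the $\epsilon^{0}$-terms and using $-\Delta_{y,z}\theta = 1 - r - 2\partial_y X$ from \eqref{EMS2}, they simplify to $-r\,w_0'' + w_0 - f$, which is zero by the homogenized equation \eqref{HOME}. What survives in $R^\epsilon$ is an explicit combination of terms of the type $\epsilon\,X\,w_0'''$, $\epsilon\,\partial_y\theta\,w_0'''$, $\epsilon^2 \theta\,w_0''$ and $\epsilon^2\theta\,w_0''''$. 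Since $f \in W^{2,\infty}(0,1)$ forces $w_0 \in W^{4,\infty}(0,1)$ through \eqref{HOME}, the scaling estimates \eqref{eq:Xstim}--\eqref{eq:anothestm} give $\|R^\epsilon\|_{L^2(R^\epsilon)}^2 = O(\epsilon^3)$, so $|||R^\epsilon|||_{L^2(R^\epsilon)} = O(\epsilon)$.

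For the boundary residual, I would evaluate $\partial\mathcal{W}_2^\epsilon/\partial N^\epsilon$ separately on $B_1^\epsilon$, $B_2^\epsilon$ and the two lateral segments. On $B_1^\epsilon$ and $B_2^\epsilon$, a direct computation shows that the $\epsilon^0$-term cancels thanks to the flux condition on $X$ in \eqref{AUXP}, while the $\epsilon$-term cancels thanks to the flux condition on $\theta$ in \eqref{EMS2}; only $-\epsilon^2\,g'(y)\,\theta(y,g(y))\,w_0'''$ survives on $B_1^\epsilon$, and this is $O(\epsilon^2)$ in $L^2(B_1^\epsilon)$ by periodicity. On the lateral boundaries $\{x_1=0,1\}$, the leading $\epsilon^0$-term carries the factor $w_0'(0) = w_0'(1) = 0$ from \eqref{HOME}, leaving a pointwise $O(\epsilon)$ remainder on segments of length $O(\epsilon)$, hence $O(\epsilon^{3/2})$ in $L^2$. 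Combined with the uniform trace inequalities $\|u\|_{L^2(B_1^\epsilon\cup B_2^\epsilon)} \leq C\,|||u|||_{H^1(R^\epsilon)}$ and $\|u\|_{L^2(\{x_1=0,1\})} \leq C\sqrt{\epsilon}\,|||u|||_{H^1(R^\epsilon)}$, the full boundary contribution is bounded by $C\epsilon\,|||e^\epsilon|||_{H^1(R^\epsilon)}$ after the $\epsilon^{-1}$ weight.

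Plugging both estimates back, I obtain $|||e^\epsilon|||^2_{H^1(R^\epsilon)} \leq C\epsilon\,|||e^\epsilon|||_{H^1(R^\epsilon)}$, which yields \eqref{SOA}. The first-order bound \eqref{FOA} then follows by the triangle inequality once one notices that $|||\mathcal{W}_2^\epsilon - \mathcal{W}_1^\epsilon|||_{H^1(R^\epsilon)} = |||\epsilon^2\theta(\cdot/\epsilon)\,w_0''|||_{H^1(R^\epsilon)} = O(\epsilon)$, immediate from \eqref{eq:anothestm}. The delicate step is the careful bookkeeping in the bulk and boundary cancellations --- this is precisely what motivated the definition of $\theta$ via \eqref{EMS2} --- and verifying that no additional boundary-layer corrector is needed at $x_1=0,1$, which is what the Neumann conditions $w_0'(0)=w_0'(1)=0$ buy us.
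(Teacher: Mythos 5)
Your proof is correct and follows the same residual/energy strategy as the paper: write the equation satisfied by the error $e^\epsilon = w^\epsilon - \mathcal{W}_2^\epsilon$, use the cell problems for $X$ and $\theta$ together with the homogenized equation to show that the bulk residual is $O(\epsilon)$ and the Neumann residual on the oscillating and flat parts of $\partial R^\epsilon$ is $O(\epsilon^2)$, and close via Cauchy--Schwarz and trace estimates in the rescaled norms. Your estimates $\|F^\epsilon\|_{L^2(R^\epsilon)} = O(\epsilon^{3/2})$ (before pulling out one power of $\epsilon$), $\|H^\epsilon N_1^\epsilon\|_{L^2(\partial R^\epsilon)} = O(1)$ and $|||\mathcal{W}_2^\epsilon - \mathcal{W}_1^\epsilon|||_{H^1} = O(\epsilon)$ all match (\ref{DF}), (\ref{DH}) and the paper's first paragraph of the proof.

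The one place you genuinely diverge is the lateral boundary, and you handle it more carefully than the paper does. The Neumann data formula $\partial\phi^\epsilon/\partial N^\epsilon = -\epsilon^2\,\theta\,w_0'''\,N_1$ is obtained using the boundary conditions of (\ref{AUXP}) and (\ref{EMS2}), which are only posed on $B_1 \cup B_2$; it does not hold on $\partial_l R^\epsilon$. The paper's footnote-level claim $\int_{\partial_l R^\epsilon}|\theta|^2\,dS = 0$ ``by periodicity'' cannot literally be true (the integrand is non-negative), and the lateral contribution is in fact not zero. Your treatment --- noting that the lateral flux $\mp\partial_{x_1}\mathcal{W}_2^\epsilon$ has its leading $\epsilon^0$ term $(1-\partial_y X)w_0'$ killed by $w_0'(0)=w_0'(1)=0$, leaving an $O(\epsilon^{3/2})$ $L^2$-remainder on a segment of length $O(\epsilon)$, and then invoking the lateral trace inequality $\|u\|_{L^2(\{x_1=0,1\})}\le C\sqrt{\epsilon}\,|||u|||_{H^1(R^\epsilon)}$ --- is the right way to justify that this contribution is harmless, and is the content the paper silently assumes. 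Incidentally, your bookkeeping yields $|||e^\epsilon|||_{H^1(R^\epsilon)}\le C\epsilon$, which is stronger than the theorem's $\sqrt{\epsilon}$; this is also what the paper's own (\ref{EI})--(\ref{EI1}) give after inserting $\|\phi^\epsilon\|_{L^2(R^\epsilon)}=\sqrt{\epsilon}\,|||\phi^\epsilon|||_{L^2(R^\epsilon)}$, so the discrepancy is a slack step in the paper, not an error in yours. Your small imprecision --- describing the lateral remainder as ``pointwise $O(\epsilon)$'' rather than $O(\epsilon)$ in $L^2$ of the trace --- is worth tightening, since $\partial_y\theta$ need not lie in $L^\infty(Y^*)$, but the $L^2$ trace on $\{y=0\}$ suffices and gives the same $O(\epsilon^{3/2})$ bound.
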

}
\begin{proof}
First, we note that \eqref{FOA} is a direct consequence from \eqref{SOA} and \eqref{eq:anothestm}, since that
\begin{eqnarray*}
\epsilon^{-1} \, \Big\| \epsilon^2 \theta \, \frac{d^2w_0}{dx^2} \Big\|^2_{H^1(R^\epsilon)} 
& \leq & \epsilon^3 \Big\| \frac{d^2w_0}{dx^2} \Big\|_{L^\infty(0,1)} \, \| \theta \|^2_{L^2(R^\epsilon)} \\
& & + \, \epsilon \, \Big\| \frac{d^3w_0}{dx^3} \Big\|_{L^\infty(0,1)} 
\left(  \| \partial_y \theta \|^2_{L^2(R^\epsilon)} + \| \partial_z \theta \|^2_{L^2(R^\epsilon)} \right)   \\ 
& \leq & K_2 \, \epsilon^2 
\end{eqnarray*}
for some constant $K_2 >0$ independent of $\epsilon$.

Now we estimate the norm $||| \phi^\epsilon |||^2_{H^1(R^\epsilon)} = a_\epsilon(\phi^\epsilon,\phi^\epsilon)$ of the function $\phi^\epsilon$ given by
$$ 
\phi^\epsilon= w^\epsilon - \mathcal{W}_2^\epsilon.$$ 
In order to do it, we compute $a_\epsilon(\phi^\epsilon, \varphi)$ for arbitrary test functions $\varphi \in
H^1(R^\epsilon)$ and establish an estimate of the form 
$$
|a_\epsilon(\phi^\epsilon,\varphi)| \leq K(\epsilon) ||| \varphi |||_{H^1(R^\epsilon)}.
$$
Since $a_\epsilon$ is an elliptic form, we can take $\varphi = \phi^\epsilon$ in the above inequality 
to obtain the desired estimate.
Using the notation from Section \ref{MSM}, we get by (\ref{RC}) that 
\begin{eqnarray*}
\sum_{i=1}^2 \frac{\partial^2 \phi^\epsilon}{{\partial x_i}^2} & = & 
\frac{1}{\epsilon} \, \Delta_{y,z} \left( X \, \frac{dw_0}{dx} \right) \\
& & + \sum_{i=1}^2 \frac{\partial^2 w^\epsilon}{{\partial x_i}^2} - \frac{d^{2}w_0}{dx^{2}}  
+ 2 \, \partial_{xy} \left( X \, \frac{dw_0}{dx} \right) - \Delta_{y,z} \left( \theta \, \frac{d^2w_0}{dx^2} \right) \\
& & + \epsilon \left[ \partial_{xx} \left( X \, \frac{dw_0}{dx} \right) - 2 \, \partial_{xy} \left( \theta \, \frac{d^2w_0}{dx^2} \right) \right] \\
& & - \epsilon^2 \, \partial_{xx} \left( \theta \, \frac{dw_0^2}{dx^2} \right).
\end{eqnarray*}
Hence, due to (\ref{AUXP}) and (\ref{EMS2}), we have
\begin{eqnarray*}
\sum_{i=1}^2 \frac{\partial^2 \phi^\epsilon}{{\partial x_i}^2} & = & 
\sum_{i=1}^2 \frac{\partial^2 w^\epsilon}{{\partial x_i}^2} - \frac{d^{2}w_0}{dx^{2}} 
+ 2 \, \partial_{y} X \, \frac{dw_0^2}{dx^2} + \left(  1 - r - 2 \, \partial_y X \right) \frac{d^2w_0}{dx^2} \\
& & + \epsilon \left[ \frac{d^3w_0}{dx^3} \left( X - 2 \, \partial_{y} \theta \right) \right]
- \epsilon^2 \, \theta \, \frac{d^4w_0}{dx^4}.
\end{eqnarray*}
Consequently, it follows from (\ref{P}) and (\ref{HOME}) (after some calculations) that
\begin{eqnarray*}
- \sum_{i=1}^2 \frac{\partial^2 \phi^\epsilon}{{\partial x_i}^2} + \phi^\epsilon & = &
f + r \, \frac{d^2w_0}{dx^2} - w_0 
- \epsilon \left[ \frac{d^3w_0}{dx^3} \left( X - 2 \partial_y \theta \right) - X \, \frac{dw_0}{dx} \right] \\
& & - \epsilon^2 \left[ \theta \left( \frac{d^2w_0}{dx^2} - \frac{d^4w_0}{dx^4} \right) \right].
\end{eqnarray*}
That is, 
\begin{eqnarray*}
- \sum_{i=1}^2 \frac{\partial^2 \phi^\epsilon}{{\partial x_i}^2} + \phi^\epsilon & = &
- \epsilon \left[ \frac{d^3w_0}{dx^3} \left( X - 2 \partial_y \theta \right) - X \, \frac{dw_0}{dx} \right] 
- \epsilon^2 \left[ \theta \left( \frac{d^2w_0}{dx^2} - \frac{d^4w_0}{dx^4} \right) \right] 
\end{eqnarray*}
in $R^\epsilon$ for all $\epsilon > 0$.

On the boundary $\partial R^\epsilon$, we have by the identity (\ref{RN}) 
and boundary conditions from (\ref{AUXP}) and (\ref{EMS2}) that
\begin{eqnarray*}
\frac{\partial \phi^\epsilon}{\partial N^\epsilon} & = & \nabla_{x_1,x_2} \phi^\epsilon \cdot N^\epsilon \\
& = & \frac{\partial w^\epsilon}{\partial N^\epsilon} - \nabla_{x_1,x_2} \left( w_0 - \epsilon \, X \, \frac{dw_0}{dx}
+ \epsilon^2 \, \theta \, \frac{dw_0^2}{dx^2}  \right) \cdot N^\epsilon \\
& = & - \left(  \partial_x + \frac{1}{\epsilon} \partial_y \right) \left( w_0 - \epsilon \, X \, \frac{dw_0}{dx}
+ \epsilon^2 \, \theta \, \frac{dw_0^2}{dx^2}  \right) \, N_1 \\
& &  - \frac{1}{\epsilon} \partial_z \left(  w_0 - \epsilon \, X \, \frac{dw_0}{dx}
 + \epsilon^2 \, \theta \, \frac{dw_0^2}{dx^2}  \right) \, N_2 \\
& = & \frac{dw_0}{dx} \left( \frac{\partial X}{\partial N} - N_1  \right) 
+ \epsilon \, \frac{dw_0^2}{dx^2} \left( X \, N_1 - \frac{\partial \theta}{\partial N} \right)
- \epsilon^2 \, \theta \, \frac{dw_0^3}{dx^3} \, N_1 \\
& = & - \epsilon^2 \, \theta \, \frac{dw_0^3}{dx^3} \, N_1.
\end{eqnarray*}
Thus, the function $\phi^\epsilon$ satisfies the following boundary value problem
\begin{equation} \label{PPHI}
\left\{
\begin{gathered}
- \Delta \phi^\epsilon + \phi^\epsilon 
= \epsilon \, F^\epsilon \quad \textrm{ in } R^\epsilon \\
\frac{\partial \phi^\epsilon}{\partial N^\epsilon} = \epsilon^2 \, H^\epsilon \, N^\epsilon_1 
\quad \textrm{ on } \partial R^\epsilon
\end{gathered}
\right.
\end{equation}
where 
\begin{equation} \label{EF}
\begin{gathered}
F^\epsilon(x_1,x_2) = - \left[ \frac{d^3w_0}{dx^3}(x_1) \Big( X \left( \frac{x_1}{\epsilon},\frac{x_2}{\epsilon} \right) 
- 2 \partial_y \theta \left(\frac{x_1}{\epsilon},\frac{x_2}{\epsilon} \right) \Big) - X \left(\frac{x_1}{\epsilon},\frac{x_2}{\epsilon} \right) \, 
\frac{dw_0}{dx}(x_1) \right] \\
\quad \quad - \epsilon \left[ \theta \left(\frac{x_1}{\epsilon},\frac{x_2}{\epsilon}\right) \left( \frac{d^2w_0}{dx^2}(x_1) 
- \frac{d^4w_0}{dx^4}(x_1) \right) \right] \quad \text{for a.e. } (x_1,x_2) \in R^\epsilon 
\end{gathered}
\end{equation}
and
\begin{equation} \label{EH}
\begin{gathered}
H^\epsilon(x_1,x_2) = - \theta \left(\frac{x_1}{\epsilon},\frac{x_2}{\epsilon} \right) \, \frac{dw_0^3}{dx^3}(x_1) 
\quad  \text{for a.e. } (x_1,x_2) \in \partial R^\epsilon.
\end{gathered}
\end{equation}

We consider now the variational formulation of problem (\ref{PPHI}): 
find $\phi^\epsilon \in H^1(R^\epsilon)$ such that
\begin{equation} \label{FV}
a_\epsilon(\phi^\epsilon,\varphi) = \int_{R^\epsilon} F^\epsilon \, \varphi \, dx 
+ \epsilon \int_{\partial R^\epsilon} H^\epsilon \, N^\epsilon_1 \, \varphi \, dS.
\end{equation}
Observe that the function $\phi^\epsilon$ must satisfy an uniform a priori estimate on $\epsilon$.
Indeed, if we take $\varphi = \phi^\epsilon$ in the expression (\ref{FV}), we obtain
\begin{eqnarray} \label{EP}
||| \phi^\epsilon |||_{H^1(R^\epsilon)}^2 & = & |a_\epsilon(\phi^\epsilon,\phi^\epsilon)| \\
& \leq  & \| \phi^\epsilon \|_{L^2(R^\epsilon)} \| F^\epsilon \|_{L^2(R^\epsilon)}
+ \epsilon \, \| \phi^\epsilon \|_{L^2(\partial R^\epsilon)} \| H^\epsilon N_1^\epsilon \|_{L^2(\partial R^\epsilon)}.
\nonumber
\end{eqnarray}

We need to get sharp inequalities on $F^\epsilon$ and $H^\epsilon$ to estimate $a_\epsilon(\phi^\epsilon,\phi^\epsilon)$.
It is clear from their definitions that these estimates will be consequence of those ones for $w_0$, $X$ and $\theta$.
Since $f$ is a smooth function, we have by classical regularity results given in \cite{ADN} that the solution $w_0$ of the 
homogenized problem is smooth enough to guarantee that its derivatives up to the fourth order are in $L^\infty(0,1)$. 
Note that similar statements are also true for $X$ and $\theta \in H^1(Y^*)$.

Due to the periodicity of $X$, we have by (\ref{eq:Xstim}) that
$$
\begin{gathered}
|| X ||_{L^2(R^\epsilon)}^2 \leq \frac{\epsilon}{L} || X ||_{L^2(Y^*)}^2, \;
|| \theta ||_{L^2(R^\epsilon)} \leq \sqrt{\frac{\epsilon}{L}} || \theta ||_{L^2(Y^*)} \; \textrm{ and } \; 
|| \partial_y \theta ||_{L^2(R^\epsilon)} \leq \sqrt{\frac{\epsilon}{L}} || \theta ||_{L^2(Y^*)}.
\end{gathered}
$$
Consequently, it is clear from (\ref{EF}) that there exists $K_0$ independent of $\epsilon$ such that
\begin{equation} \label{DF}
\| F^\epsilon \|_{L^2(R^\epsilon)} \leq K_0 \, \sqrt{\epsilon}.
\end{equation}
Let us observe that $K_0$ depends on the period $L$ of the norms of $ X, \, \theta \textrm{ and } \partial_y \theta $ in $L^{2}(Y^*)$, as well of the norms of $\frac{dw_0}{dx}, \, \frac{d^2w_0}{dx^2}, \, \frac{d^3w_0}{dx^3} \textrm{ and } \frac{d^4w_0}{dx^4} $ in $L^{\infty}(0,1)$.

Now, let us denote the {\it oscillatory} part of $\partial R^\epsilon$ by 
$\partial_{o} R^\epsilon = \{(x_1,\epsilon g(x_1/\epsilon)):0<x_1<1\},$
the {\it fixed} part by 
$\partial_{f} R^\epsilon = \{(x_1,0)):0<x_1<1\}$ 
and the {\it lateral} part of $\partial R^\epsilon$ as 
$\partial_{l} R^\epsilon = \{(0,x_2)):0<x_2<\epsilon g(0)\}\cup \{(1,x_2)):0<x_2<\epsilon g(1/\epsilon)\}.$
From definition (\ref{EH}) we have
\begin{eqnarray*}
\| H^\epsilon \, N_1^\epsilon \|_{L^2(\partial R^\epsilon)}^2 & = &
\int_{\partial R^\epsilon} \left| \theta\left(\frac{x_1}{\epsilon},\frac{x_2}{\epsilon}\right) 
\, \frac{dw_0^3}{dx^3}(x_1) \, N_1^\epsilon(x_1,x_2) \right|^2 dS \\
& \leq & \left\| \frac{dw_0^3}{dx^3} \right\|_{L^\infty(0,1)} \int_{\partial R^\epsilon} 
\left| \theta\left(\frac{x_1}{\epsilon},\frac{x_2}{\epsilon}\right) \right|^2 dS \\
& \leq & \left\| \frac{dw_0^3}{dx^3} \right\|_{L^\infty(0,1)} 
\left( \int_{\partial_{o}R^\epsilon}  \left| \theta\left(\frac{x_1}{\epsilon},\frac{x_2}{\epsilon}\right) \right|^2 dS 
+ \int_{\partial_{f}R^\epsilon}  \left| \theta\left(\frac{x_1}{\epsilon},\frac{x_2}{\epsilon}\right) \right|^2 dS \right) \\
& \leq & K_1 \left( \sum_{k=1}^{1/\epsilon L} \, \epsilon \, \int_0^L  \left| \theta(y,g(y)) \right|^2 \, dy 
+ \sum_{k=1}^{1/\epsilon L} \, \epsilon \, \int_0^L  \left| \theta(y,0) \right|^2 dy \right) \\
& \leq & \frac{K_1}{L} \| \theta \|_{L^2(\partial Y^*)}^2
\end{eqnarray*}
where 
$
K_1=\| (1+g')^{\frac{1}{2}}\|_{L^{\infty}(0,L)} \left\|{dw_0^3}/{dx^3} \right\|_{L^\infty(0,1)} 
$ 
is independent of $\epsilon$. 
Note that we have used the periodicity of $\theta$ to get
$
\int_{\partial_{l}R^\epsilon}  \left| \theta(\frac{x_1}{\epsilon},\frac{x_2}{\epsilon}) \right|^2 dS = 0.
$
Consequently there exists $K_2 > 0$ independent of $\epsilon$ such that
\begin{equation} \label{DH}
\| H^\epsilon \, N_1^\epsilon \|_{L^2(\partial R^\epsilon)}^2 \leq \tilde K_2. 
\end{equation}

Now we have all the ingredients to estimate $a_\epsilon(\phi^\epsilon,\phi^\epsilon)$.
Due to (\ref{DF}) and (\ref{DH}) we get from (\ref{EP}) that
\begin{equation} \label{EI}
||| \phi^\epsilon |||_{H^1(R^\epsilon)}^2 \leq  \epsilon^{1/2} \, K_0 \, \| \phi^\epsilon \|_{L^2(R^\epsilon)} 
+ \epsilon \, \tilde K_0 \, \| \phi^\epsilon \|_{L^2(\partial R^\epsilon)}.
\end{equation}
Hence, the desired result follows from the following fact:
If $\varphi \in H^1(R^\epsilon)$, then there exists a constant $C$ independent of $\epsilon$ such that
\begin{equation} \label{EI1}
\| \varphi \|_{L^2(\partial R^\epsilon)} \leq C \epsilon^{-1/2} \, \| \varphi \|_{H^1(R^\epsilon)}.
\end{equation}
Indeed, if we combine (\ref{EI}) and (\ref{EI1}), we obtain $K_1>0$ independent of $\epsilon$ 
such that 
$$
||| \phi^\epsilon |||_{H^1(R^\epsilon)} \leq  K_1 \, \epsilon^{1/2}.
$$
The proof of (\ref{EI1}) can be found in \cite{CP,L}.
We recall it here for the reader's convenience. From smoothness of $\partial Y^*$ we can define a smooth extension, $M = (M_1,M_2) \in C^{1}(\overline{Y^*})$, of the unitary normal vector field $N$ on $Y^*$, such that  $M(y,z) = N(y,z) \textrm{ a.e. } \partial Y^*$, and with support of $M$ in a some neighborhood of $\partial Y^*$.
Hence, for all $\varphi \in H^1(R^\epsilon)$ it follows that
\begin{eqnarray*}
\| \varphi \|_{L^2(\partial R^\epsilon)}^2 
& = & \int_{\partial R^\epsilon} \varphi^2 \, M \left( \frac{x_1}{\epsilon},\frac{x_2}{\epsilon} \right) \cdot N^\epsilon \, dS \\
& = & \int_{R^\epsilon} \nabla \left(\varphi^2\right) \cdot M \left( \frac{x_1}{\epsilon},\frac{x_2}{\epsilon} \right) dx_1 dx_2 +
\int_{R^\epsilon} \varphi^2 \, {\rm div} M \left( \frac{x_1}{\epsilon},\frac{x_2}{\epsilon} \right) dx_1 dx_2 \\
& = & 2 \int_{R^\epsilon} \varphi \nabla \varphi \cdot M \left( \frac{x_1}{\epsilon},\frac{x_2}{\epsilon} \right)  dx_1 dx_2  + \epsilon^{-1} \int_{R^\epsilon} \varphi^2  \left\{ \sum_{i=1}^2 \partial_{y_i} M_i \left( \frac{x_1}{\epsilon},\frac{x_2}{\epsilon} \right) \right\} dx_1 dx_2 \\
& \leq & C_1 \, \|\varphi\|_{L^2(R^\epsilon)} \, \|\nabla \varphi\|_{L^2(R^\epsilon)} + C_2{\epsilon^{-1}} \, \|\varphi\|^{2}_{L^2(R^\epsilon)}  \\
& \leq & \max\{C_1,C_2\} \, {\epsilon^{-1}}\|\varphi\|_{H^1(R^\epsilon)}^{2}.
\end{eqnarray*}
\end{proof}

\section{Final conclusion}

In this work, we give a precise rate of the convergence for solutions of an elliptic problem posed in a family of rough domains with a singular collapsing structure. In our analysis we use the corrector approach grounded in a formal asymptotic expansion of solutions, widely used in homogenization theory, to obtain a rigorous strong convergence result in Theorem \ref{teo:first-order}. The second-order corrector is used to obtain an error estimate for the convergence result in Theorem \ref{T2}. We observe that the rate of our convergence result is sharp in the sense that we do not have introduced any boundary layer terms in the correctors, see e.g. \cite{GP}.

\vspace{1 cm}

\par\noindent {\bf Acknowledgments.} Part of this work was done while the authors were visiting the Departamento de Matem\'atica of the Universidade de S\~ao Paulo, SP - Brazil. We kindly express our gratitude to the Department.  The authors thanks to Jos\'e M. Arrieta and Alexandre N. Carvalho for their remarks and suggestions. We also would like to thank the anonymous referee whose comments have considerably improved the writing of the paper.


\begin{thebibliography}{99}

\bibitem{ADN} S. Agmon, A. Douglis, L. Nirenberg; 
\emph{Estimates near the boundary for solutions of elliptic partial diffe\-ren\-tial equations satisfying general boundary value conditions $I$}; Comm. Pure Appl. Math. 12 (1959) 623-727.

\bibitem{ABMG} Y. Amirat, O. Bodart, U. de Maio, A. Gaudiello; \emph{Asymptotic Approximation 
of the solution of the Laplace equation in a domain with highly oscillating boundary}; \emph{SIAM J. Math. Anal.} 
35, (2004) 1598-1616.

\bibitem{A} J. M. Arrieta;
\emph{Spectral properties of Schr\"{o}dinger operators under  perturbations of the domain}; Ph.D. Thesis, Georgia Inst. of Tech. (1991).

\bibitem{ACPS} J. M. Arrieta, A. N. Carvalho, M. C. Pereira and R. P. Silva; 
\emph{Nonlinear parabolic problems in thin domains with a highly oscillatory boundary}; Nonlinear Analysis: Theory Methods and Appl., (74) 15 (2011) 5111-5132.

\bibitem{AP} J. M. Arrieta and M. C. Pereira; 
\emph{Elliptic problems in thin domains with highly oscillating boundaries}; Bol. Soc. Esp. Mat. Apl. 51 (2010) 17-25.

\bibitem{AP2} J. M. Arrieta and M. C. Pereira; 
\emph{Homogenization in a thin domain with an oscillatory boundary}; J. Math. Pures et Appl. 96 (2011) 29-57.

\bibitem{BLP} A. Bensoussan, J. L. Lions and G. Papanicolaou; 
\emph{Asymptotic Analysis for Periodic Structures}; North-Holland (1978).

\bibitem{BCh} R. Brizzi, J.P. Chalot; \emph{Boundary homogenization and Neumann boundary problem}; Ricerce di Matematica XLVI, 2 (1997) 341-387.


\bibitem{CDG} D. Cioranescu, A. Damlamian and G. Griso; 
\emph{The periodic unfolding method in homogenization}; SIAM J. Math. Anal. 40 no. 4 (2008) 1585-1620.

\bibitem{CD} D. Cioranescu and P. Donato; 
\emph{An Introduction to Homogenization}; Oxford lecture series in mathematics and its applications (1999).

\bibitem{CP-79} D. Cioranescu and J. Saint J. Paulin; \emph{Homogenization in open sets with holes}; J. Math Anal. Appl. 71 (1979), 590-607.

\bibitem{CP} D. Cioranescu and J. Saint J. Paulin; 
\emph{Homogenization of Reticulated Structures}; Springer Verlag (1980).

\bibitem{DP}  A. Damlamian, K. Pettersson; \emph{Homogenization of oscillating boundaries}; 
 Discrete and Continuous Dynamical Systems 23, (2009), 197-219.

\bibitem{Elsken:2005p40} T. Elsken;
\emph{Continuity of attractors for net-shaped thin domain}; Topol. Meth. Nonlinear Analysis, 26 (2005) 315-354.

\bibitem{HR} J. K. Hale and G. Raugel; 
\emph{Reaction-diffusion equation on thin domains}; J. Math. Pures et Appl. (9) 71 no. 1 (1992) 33-95.

\bibitem{L} J. -L. Lions; 
\emph{Asymptotic expansions in perforated media with a periodic structure}; Rocky Mountain J. Math. 10 (1) (1998) 125-140.

\bibitem{Madureira1} D. N. Arnold and A. L. Madureira; \emph{Asymptotic Estimates of Hierarchical}; Modeling Mathematical Modeling and Methods in Applied Sciences, vol. 13, No. 9, (2003) 1325-1350.


\bibitem{Madureira3} A. L. Madureira and F. Valentin; \emph{Asymptotics of the Poisson Problem in domains with curved rough boundaries}; SIAM Journal on Mathematical Analysis 38, No. 5, (2007) 1450-1473.

\bibitem{TAM} T. A. Mel'nyk;
\emph{Homogenization of the Poisson equation in a thick periodic junction}; 
Z. Anal. Anwendungen 18, (4), (1999) 953-975. 


\bibitem{N-K:97} J. Nevard and J. B. Keller;
\emph{Homogenization of rough boundaries and interfaces}; SIAM J. Appl. Math., 57 no. 6 (1997) 1660-1686


\bibitem{GP} G. Panasenko;
\emph{Multi-scale modelling for structures and composites}; Springer, Dordrecht, 2005.

\bibitem{PR01} M. Prizzi and K. P. Rybakowski;
\emph{The effect of domain squeezing upon the dynamics of reaction-diffusion equations}; Journal of Diff. Equations, 173 (2) (2001) 271-320.

\bibitem{Prizzi:2002p441} M. Prizzi and M. Rinaldi and K. P. Rybakowski;
\emph{Curved thin domains and parabolic equations}; Studia mathematica, 151 (2) (2002) 109-140.

\bibitem{R} G. Raugel; 
\emph{Dynamics of partial differential equations on thin domains}; Lecture Notes in Math., vol 1609, Springer Verlag (1995).

\bibitem{SP} E. S\'anchez-Palencia;  
\emph{Non-Homogeneous Media and Vibration Theory}; Lecture Notes in Phys. 127, Springer Verlag (1980).

\bibitem{Ricardo} R. P. Silva; 
\emph{Semicontinuidade inferior de atratores para problemas parab\'olicos em dom\'inios finos}; Phd Thesis, ICMC - USP, (2007).

\bibitem{Tt} L. Tartar; 
\emph{ The General Theory of Homogenization. A personalized introduction}; Lecture Notes of the Un. Mat. Ital, 7, Springer-Verlag, Berlin (2009).

\end{thebibliography}
\end{document}